\documentclass[11pt]{article}

\usepackage{amsmath,amssymb,amsthm}
\numberwithin{equation}{section}

\newtheorem{theorem}{Theorem}[section]
\newtheorem{lemma}[theorem]{Lemma}
\newtheorem{problem}{Problem}

\title{\bf A note on a problem of Ma and Chen}
\author{Bing-Ling Wu, Wu-Xia Ma\\
	{\small School of Science, Nanjing University of Posts and Telecommunications,}\\
	{\small Nanjing 210023, PR China}\\
	{\small E-mail: wubingling911219@163.com, mawuxia@126.com}}
\date{}

\begin{document}
	
	\maketitle
	
	\begin{abstract}
		Let $S=(s_1,s_2,\cdots)$ be a strictly increasing sequence  of positive integers. For any real number $\alpha>0$, let $S_\alpha=(\lfloor\alpha s_1\rfloor,\lfloor\alpha s_2\rfloor,\cdots)$. We prove that if $s_{n+1}\le 2s_n$ for every positive integer $n$, then there exists $0<\alpha<1$ such that $S_\alpha$ is strongly complete. In addition, this gives a negative answer to a problem of Ma and Chen in [Acta Arith. 203 (2022), 307--318]. We also prove that the same result is true if $s_{n+1}\le 2s_n$ for all sufficiently large $n$.
	\end{abstract}
	
	{\it Keywords.} Complete sequences; strongly complete sequences; subset sums; floor function.
	
	{\it Mathematics Subject Classification.} 11B13, 11B75.
	
	\section{Introduction}
	
	Let $\mathbb{N}_0=\mathbb{N}\cup\{0\}$. For any sequence $A=(a_1,a_2,\cdots)$ of nonnegative integers, let
	$$P(A)=\left\{\sum_{i\in F}a_i:F\subseteq\mathbb{N},\ |F|<\infty\right\}.$$
	The sequence $A$ is called complete if all sufficiently large integers belong to $P(A)$. It is called strongly complete if
	$$P(A)=\mathbb{N}_0.$$
	For any sequence $S=(s_1,s_2,\cdots)$ of positive integers and any real number $\alpha>0$, let
	$$S_\alpha=(\lfloor\alpha s_1\rfloor,\lfloor\alpha s_2\rfloor,\cdots),$$
	and let
	$$U_S=\{\alpha>0:S_\alpha\text{ is complete}\},$$
	$$V_S=\{\alpha>0:S_\alpha\text{ is strongly complete}\}.$$
	Hegyv\'ari \cite{Heg} proved that if
	$$\lim_{n\to\infty}(s_{n+1}-s_n)=+\infty,
	\quad s_{n+1}<\gamma s_n\quad(n\ge n_0),$$
	where $1<\gamma<2$, and $U_S\ne\emptyset$, then $\mu(U_S)>0$, where $\mu$ denotes the Lebesgue measure. Chen and Fang \cite{CF} removed the condition
	$$\lim_{n\to\infty}(s_{n+1}-s_n)=+\infty.$$
	In addition, Chen and Fang \cite{CF} proved that $\mu(U_S)>0$ if
	$$s_{n+1}<\gamma s_n\quad(n\ge n_0),
	\quad 1<\gamma\le\frac 74.$$
	Fang and Liu \cite{FL} improved it to
	$$1<\gamma\le\sqrt[4]{13}.$$
	Ma and Chen \cite[Theorem 1.4]{MC} proved that $\mu(V_S)>0$ if
	$$s_{n+1}<\gamma s_n\quad(n\ge n_0),
	\quad 1<\gamma<2.$$
	For related results, see \cite{CT,JM,TX}.
	
	Ding \cite{Ding} proved that the answer to Problem 3.2 of Ma and Chen is positive. Problem 3.1 of Ma and Chen is the following.
	
	\begin{problem}\label{prob1}
		Does there exist a sequence of positive integers
		$$s_1<s_2<\cdots,\quad s_{n+1}\le 2s_n,$$
		such that $P(S_\alpha)\ne\mathbb{N}_0$ for every $0<\alpha<1$?
	\end{problem}
	
	We prove the following theorem.
	
	\begin{theorem}\label{thm1}
		Let $S=(s_1<s_2<\cdots)$ be a sequence of positive integers satisfying
		$$s_{n+1}\le 2s_n\quad(n\ge 1).$$
		Then there exists $0<\alpha<1$ such that
		$$P(S_\alpha)=\mathbb{N}_0.$$
		If $s_1\ge 2$, then $\alpha$ can be chosen to satisfy
		$$\frac 1{s_1}\le\alpha<\frac 2{s_1}.$$
	\end{theorem}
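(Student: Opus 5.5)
The plan is to reduce strong completeness to Brown's criterion and then choose $\alpha$ by a nested-interval argument. Put $t_n=t_n(\alpha)=\lfloor \alpha s_n\rfloor$ and $T_n=t_1+\cdots+t_n$. Since $S$ is strictly increasing, $(t_n)$ is nondecreasing. Brown's criterion says that $P(S_\alpha)=\mathbb{N}_0$ holds if $t_1=1$ and $t_{n+1}\le T_n+1$ for all $n\ge1$. In that case $P(t_1,\dots,t_n)=[0,T_n]$ for every $n$, by induction on $n$.

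To get $t_1=1$, restrict to $\alpha\in[1/s_1,2/s_1)$ when $s_1\ge2$. If $s_1=1$, take $\alpha$ slightly below $1$ and run the same argument. The hypothesis $s_{n+1}\le 2s_n$ gives
$$t_{n+1}\le\lfloor 2\alpha s_n\rfloor\le 2t_n+1,$$
and $t_{n+1}=2t_n+1$ is only possible when $\{\alpha s_n\}\ge\tfrac12$.

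Next I locate where Brown's inequality can fail. Suppose it holds up to index $n$. If $t_n\le T_{n-1}$, then $T_n\ge 2t_n$, so $t_{n+1}\le 2t_n+1\le T_n+1$ and nothing can go wrong. So the only dangerous situation is that $t_n=T_{n-1}+1$ is extremal. Then $T_n=2t_n-1$, and the inequality fails exactly when $t_{n+1}=2t_n+1$. This forces $\{\alpha s_n\}\ge\tfrac12$ and $s_{n+1}$ very close to $2s_n$. Hence a failure at step $n+1$ needs a whole chain of extremal steps $t_{k+1}=T_k+1$ for $k<n$, each with a large fractional part.

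The point of the freedom in $\alpha$ is to break every such chain. For this I would note that $t_n(\alpha)$ is right-continuous and nondecreasing in $\alpha$. I also rewrite the constraints as ones on the real sums $\alpha(s_1+\cdots+s_n)$: the telescoping bound $s_n-s_1\le s_1+\cdots+s_{n-1}$, valid because $s_{i+1}-s_i\le s_i$, shows that the real-valued analogue of Brown's condition always holds with room $s_1$.

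The loss from flooring accumulates, and the parameter has to absorb it. Define
$$A_n=\{\alpha\in[1/s_1,2/s_1):\ t_{k+1}\le T_k+1\ \text{for } 1\le k<n\}.$$
Each $A_n$ is a finite union of intervals of the form $[a,b)$. I will prove by induction a stronger invariant: $A_n$ contains a half-open interval $J_n=[\beta_n,\gamma_n)$, $J_{n+1}\subseteq J_n$, and on $J_n$ all of $t_1,\dots,t_n$ are constant.

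For the inductive step, I move to the left part of $J_n$. There all $\{\alpha s_k\}$ with $k\le n$ are as small as they can be on $J_n$. Then $t_{n+1}$ either stays at most $2t_n$ on a subinterval, or the chain is non-extremal and the step is free. I expect this step, keeping a nondegenerate subinterval on which the dangerous case $t_{n+1}=2t_n+1$ is avoided, to be the main obstacle. I would first try taking $J_{n+1}$ to be the leftmost piece of $J_n$ on which $t_{n+1}$ is constant, and check that at its left endpoint $2\alpha s_n<2t_n+1$, so that $t_{n+1}\le 2t_n$ there.

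Finally, $\alpha^*=\lim\beta_n$ must lie in every $J_n$. By right-continuity and the left-endpoint choice, I expect $\beta_n$ to be nondecreasing and bounded by each $\gamma_m$, with $\alpha^*<\gamma_n$. Then $\alpha^*\in\bigcap_n A_n$, Brown's criterion applies, and $\alpha^*\in[1/s_1,2/s_1)\subset(0,1)$.
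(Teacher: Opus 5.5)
Your setup is sound: Brown's criterion, the bound $t_{n+1}\le 2t_n+1$, and the observation that failure at index $n+1$ needs $t_n=T_{n-1}+1$ and $t_{n+1}=2t_n+1$. But the only nontrivial step, which you yourself flag as the main obstacle, is not supplied, and the rule you suggest for it fails.

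Choosing $J_{n+1}$ as the leftmost constancy piece of $t_{n+1}$ inside $J_n$ never moves the left endpoint. So $\beta_n=1/s_1$ for all $n$, and the construction simply outputs $\alpha^*=1/s_1$. That choice fails in general. Take $S=(3,5,10,20,40,80,\dots)$, doubling from $s_2$ on. Then $S_{1/3}=(1,1,3,6,13,\dots)$, and since $13>1+(1+1+3+6)$, we get $12\notin P(S_{1/3})$. Your left-endpoint check $2\alpha s_4<2t_4+1$ reads $40/3<13$ here, which is false.

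Worse, the invariant itself is not inductive. $J_4=[1/3,7/20)$ satisfies it: $t_1,\dots,t_4=1,1,3,6$ are constant and Brown's inequalities hold. Yet $t_5=13>T_4+1$ on all of $J_4$. In fact every $\alpha\in[1/3,2/5)$ fails, so committing to $t_2=1$ at step $2$ is already fatal, and this only becomes visible at step $5$. Your scheme freezes $t_1,\dots,t_n$ on $J_n$ and insists on $J_{n+1}\subseteq J_n$, so it can never revise an earlier $t_k$. No local rule for picking pieces has been justified.

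What is missing is the paper's margin invariant and the direction of the correction. The paper uses intervals $I_n=[\alpha_n,\alpha_n+1/s_n)$, on which the $t_k$ need not be constant. It maintains $\Phi_j(\alpha_n)\ge s_j/s_n$ for $j\le n$, where $\Phi_j(x)=2+\sum_{i<j}\lfloor xs_i\rfloor-xs_j$. Since the floors only grow as $x$ moves right, this keeps every Brown inequality valid on all of $I_n$.

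When $\Phi_{n+1}(\alpha_n)<1$, this forces $t_n=T_{n-1}$, $t_{n+1}=2t_n+1$ and $r=\{\alpha_n s_n\}>s_n/s_{n+1}$. The paper then moves right, not left: $\alpha_{n+1}=\alpha_n+(1-r)/s_n$, so $t_n$ increases by one. This gains $1$ in $\sum_{i\le n}t_i$ and loses at most $s_{n+1}(1-r)/s_n\le 2(1-r)\le\Phi_{n+1}(\alpha_n)$ in the term $-xs_{n+1}$, which gives $\Phi_{n+1}(\alpha_{n+1})\ge 1$. The inequality $r>s_n/s_{n+1}$ gives $\alpha_{n+1}+1/s_{n+1}<\alpha_n+1/s_n$. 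In the example this moves $1/3$ to $2/5$ at step $2$, and $S_{2/5}=(1,2,4,8,16,\dots)$.

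The strict decrease of the right endpoints is also what puts the limit inside every $I_N$. Your final step only ``expects'' this, and nested half-open intervals can have empty intersection.

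Finally, for $s_1=1$, ``$\alpha$ slightly below $1$'' is unjustified. The paper instead applies the $s_1\ge 2$ case to $(s_2,s_3,\dots)$, where $s_2=2$, and obtains some $\alpha\in[1/2,1)$.
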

	
	Theorem \ref{thm1} gives a negative answer to Problem \ref{prob1}.
	
	\begin{theorem}\label{thm2}
		Let $S=(s_1<s_2<\cdots)$ be a sequence of positive integers such that
		$$s_{n+1}\le 2s_n$$
		for all sufficiently large $n$. Then there exists $0<\alpha<1$ such that
		$$P(S_\alpha)=\mathbb{N}_0.$$
	\end{theorem}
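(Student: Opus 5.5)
Theorem \ref{thm2} will be reduced to Theorem \ref{thm1} by passing to a tail of $S$. Choose $n_0$ such that $s_{n+1}\le 2s_n$ for all $n\ge n_0$, and put $T=(t_1,t_2,\cdots)$ with $t_k=s_{n_0+k-1}$. Then $T$ is strictly increasing and satisfies $t_{k+1}\le 2t_k$ for every $k\ge 1$. Enlarging $n_0$ if necessary, we may also assume $t_1=s_{n_0}\ge 3$; this is possible because $s_n\to\infty$.

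Apply Theorem \ref{thm1} to $T$. Since $t_1\ge 2$, it gives some $\alpha$ with
$$\frac1{t_1}\le\alpha<\frac2{t_1}\le\frac23<1$$
and $P(T_\alpha)=\mathbb{N}_0$. The upper bound on $\alpha$ is exactly why the refined second clause of Theorem \ref{thm1} is needed: it keeps $\alpha<1$ explicitly.

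Next, pass from $T_\alpha$ back to $S_\alpha$. The sequence $T_\alpha$ is a subsequence of $S_\alpha$, namely its terms with indices $n\ge n_0$. Any finite subset sum of $T_\alpha$ is therefore a finite subset sum of $S_\alpha$, using the same terms with indices shifted by $n_0-1$. This gives
$$\mathbb{N}_0=P(T_\alpha)\subseteq P(S_\alpha)\subseteq\mathbb{N}_0.$$
All terms $\lfloor\alpha s_i\rfloor$ are nonnegative, so $P(S_\alpha)\subseteq\mathbb{N}_0$. Hence $P(S_\alpha)=\mathbb{N}_0$, and Theorem \ref{thm2} follows.

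There is no real obstacle in this argument; all the work is carried by Theorem \ref{thm1}, which we may assume. The only points to check are the following.
\begin{itemize}
\item The extra initial terms of $S$ may violate the doubling condition, but this is harmless: they can only add further subset sums.
\item The tail must start with $t_1\ge 2$, so that the bound $\alpha<2/t_1\le 1$ applies.
\end{itemize}
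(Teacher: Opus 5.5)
Your proof is correct and is essentially the paper's argument: pass to a tail $(s_m,s_{m+1},\dots)$ on which the doubling condition holds, apply Theorem~\ref{thm1} to it, and note that its subset sums are subset sums of $S_\alpha$, whose terms are all nonnegative. One small correction: the refined clause is not actually needed, because the first part of Theorem~\ref{thm1} already gives $0<\alpha<1$ even when the first term is $1$, so the assumptions $t_1\ge 2$ or $t_1\ge 3$ are harmless but not essential.
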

	
	\section{Proof of Theorems}
	
	We use the following lemma of Brown \cite{Brown}.
	
	\begin{lemma}\label{lem1}
		Let $1=a_1\le a_2\le\cdots$ be positive integers. Then
		$$P((a_1,a_2,\cdots))=\mathbb{N}_0$$
		if and only if
		$$a_j\le 1+\sum_{i<j}a_i\quad(j\ge 2).$$
	\end{lemma}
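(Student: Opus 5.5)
The plan is to prove the two directions separately. Throughout, write $T_j=\sum_{i\le j}a_i$ and $T_0=0$. Sufficiency will go by induction on $j$, showing that every integer in $[0,T_j]$ is a subset sum of $a_1,\dots,a_j$. Necessity will go by exhibiting an explicit integer that cannot be represented.

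\emph{Sufficiency.} Assume $a_j\le 1+T_{j-1}$ for all $j\ge 2$. I claim that for every $j\ge 1$,
$$[0,T_j]\cap\mathbb{Z}\subseteq P((a_1,\dots,a_j)).$$
For $j=1$ we have $T_1=a_1=1$, and both $0$ and $1$ are subset sums. For the inductive step, take an integer $n\in[0,T_j]$.
\begin{itemize}
\item If $n\le T_{j-1}$, the induction hypothesis already represents $n$ using $a_1,\dots,a_{j-1}$.
\item If $T_{j-1}<n\le T_j$, then
$$n-a_j\ge T_{j-1}+1-a_j\ge 0$$
by the hypothesis on $a_j$, and $n-a_j\le T_j-a_j=T_{j-1}$. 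So $n-a_j$ is a subset sum of $a_1,\dots,a_{j-1}$, and adding $a_j$ gives a representation of $n$.
\end{itemize}
Since every $a_i\ge 1$, we have $T_j\to\infty$. Hence every nonnegative integer lies in some $[0,T_j]$, which gives $P((a_1,a_2,\dots))=\mathbb{N}_0$.

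\emph{Necessity.} Suppose $a_j>1+T_{j-1}$ for some $j\ge 2$, and set $m=1+T_{j-1}$. Consider any finite set $F$.
\begin{itemize}
\item If $F\subseteq\{1,\dots,j-1\}$, then $\sum_{i\in F}a_i\le T_{j-1}<m$.
\item If $F$ contains some $k\ge j$, then by monotonicity $\sum_{i\in F}a_i\ge a_k\ge a_j>m$.
\end{itemize}
So $m\notin P(A)$, and hence $P(A)\neq\mathbb{N}_0$.

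The argument is elementary, and I expect no real obstacle. The only points needing care are these:
\begin{itemize}
\item The nondecreasing hypothesis is used only in the necessity direction, to control every $a_k$ with $k\ge j$.
\item The hypothesis $a_1=1$ is what makes the base case, and hence the whole induction, start.
\end{itemize}
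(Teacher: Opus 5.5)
Your proposal is correct and follows essentially the same route as the paper. Necessity exhibits the unrepresentable integer $1+\sum_{i<j}a_i$ using monotonicity and positivity. Sufficiency is an induction showing that the first $j$ terms represent every integer in $[0,\sum_{i\le j}a_i]$, using $a_1=1$ for the base case and $a_i\ge 1$ to let the partial sums go to infinity.
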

	
	\begin{proof} Suppose that
		$$a_j>1+\sum_{i<j}a_i$$
		for some $j$. Every sum of terms with indices less than $j$ is at most $\sum_{i<j}a_i$, and every sum containing a term with index at least $j$ is at least $a_j$. Hence
		$$1+\sum_{i<j}a_i\notin P((a_1,a_2,\cdots)).$$
		
		Now suppose that the inequalities in the lemma hold. We use induction on $j$. The result is clear for $j=1$. Suppose that the first $j-1$ terms represent every integer from $0$ to
		$$\sum_{i<j}a_i.$$
		Then the sums containing $a_j$ represent every integer from $a_j$ to
		$$a_j+\sum_{i<j}a_i.$$
		Since $a_j\le 1+\sum_{i<j}a_i$, it follows that the first $j$ terms represent every integer from $0$ to
		$$\sum_{i\le j}a_i.$$
		Since $a_i\ge 1$, the partial sums tend to infinity, so $P((a_1,a_2,\cdots))=\mathbb{N}_0$.
		This completes the proof of Lemma \ref{lem1}.
	\end{proof}
	
	Let $S=(s_1<s_2<\cdots)$ be a sequence of positive integers.
	For any real number $x$, let $\lfloor x\rfloor$ denote the greatest integer not exceeding $x$, and let $\{x\}=x-\lfloor x\rfloor$.
	For $j\ge 1$ and $x>0$, let
	$$H_j(x)=1+\sum_{i<j}\lfloor xs_i\rfloor-\lfloor xs_j\rfloor,
	\quad \Phi_j(x)=2+\sum_{i<j}\lfloor xs_i\rfloor-xs_j.$$
	Then
	\begin{equation}\label{eq1}
		\Phi_j(x)=H_j(x)+1-\{xs_j\}.
	\end{equation}
	Noting that $H_j(x)$ is an integer, we have
	$$\Phi_j(x)>0\quad\Longleftrightarrow\quad H_j(x)\ge 0.$$
	
	\begin{lemma}\label{lem2}
		Let $n\ge 1$ and $1\le j\le n$. If
		$$\Phi_j(x)\ge\frac {s_j}{s_n},$$
		then $\Phi_j(x+u)>0$ for every $0\le u<1/s_n$.
	\end{lemma}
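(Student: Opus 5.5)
We will compare $\Phi_j(x+u)$ with $\Phi_j(x)$ directly, using only the facts that $\lfloor\cdot\rfloor$ is nondecreasing and that $s_j\le s_n$ for $j\le n$ (the sequence $S$ is strictly increasing). The idea is that moving from $x$ to $x+u$ cannot decrease the sum $\sum_{i<j}\lfloor xs_i\rfloor$, while the term $-xs_j$ decreases by exactly $us_j$, which is less than $s_j/s_n$.

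First we treat the floor sum. For $0\le u$ and each $i<j$, monotonicity of the floor function gives $\lfloor (x+u)s_i\rfloor\ge\lfloor xs_i\rfloor$. Summing over $i<j$ yields
$$\sum_{i<j}\lfloor (x+u)s_i\rfloor\ge\sum_{i<j}\lfloor xs_i\rfloor .$$

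Next we treat the linear term. By definition, $-(x+u)s_j=-xs_j-us_j$. Combining this with the previous inequality gives
$$\Phi_j(x+u)\ge 2+\sum_{i<j}\lfloor xs_i\rfloor-xs_j-us_j=\Phi_j(x)-us_j .$$

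Finally we use the two hypotheses. Since $0\le u<1/s_n$ and $s_j>0$, we have $us_j<s_j/s_n$. Together with $\Phi_j(x)\ge s_j/s_n$ this gives
$$\Phi_j(x+u)\ge\Phi_j(x)-us_j>\frac{s_j}{s_n}-\frac{s_j}{s_n}=0 .$$
The inequality is strict because $us_j<s_j/s_n$ is strict. This proves Lemma \ref{lem2}.

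There is no real obstacle in this argument. The only points needing care are the case $j=1$, where the floor sum is empty (equal to $0$) and the same argument applies, and keeping track of strictness in the last step. Note that $j\le n$ is not actually needed for the inequality itself; it only ensures $s_j/s_n\le 1$, which makes the hypothesis meaningful for the later application.
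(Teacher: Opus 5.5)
Your proof is correct and follows essentially the same route as the paper. Monotonicity of the floor function gives $\Phi_j(x+u)\ge\Phi_j(x)-us_j$, and $us_j<s_j/s_n\le\Phi_j(x)$ then gives strict positivity; your remark that $j\le n$ is not needed for this inequality is also accurate.
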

	
	\begin{proof} Since $0\le u<1/s_n$, it follows that
		$$us_j<\frac {s_j}{s_n}\le\Phi_j(x).$$
		For $i<j$, we have $\lfloor(x+u)s_i\rfloor\ge\lfloor xs_i\rfloor$. Thus
		\begin{align*}
			\Phi_j(x+u)&=2+\sum_{i<j}\lfloor(x+u)s_i\rfloor-(x+u)s_j\\
			&\ge\Phi_j(x)-us_j>0.
		\end{align*}
		This completes the proof of Lemma \ref{lem2}.
	\end{proof}
	
	\begin{lemma}\label{lem3}
		Suppose that $s_1\ge 2$ and $s_{n+1}\le 2s_n$ for every $n$. There exist real numbers $\alpha_n$ such that, for every $n\ge 1$,
		\begin{equation}\label{eq2}
			\Phi_j(\alpha_n)\ge\frac {s_j}{s_n}\quad(1\le j\le n),
		\end{equation}
		\begin{equation}\label{eq3}
			\alpha_{n+1}\ge\alpha_n,
			\quad \alpha_{n+1}+\frac 1{s_{n+1}}<\alpha_n+\frac 1{s_n},
		\end{equation}
		and
		\begin{equation}\label{eq4}
			\frac 1{s_1}\le\alpha_n<\alpha_n+\frac 1{s_n}\le\frac 2{s_1}.
		\end{equation}
		Let
		$$I_n=\left[\alpha_n,\alpha_n+\frac 1{s_n}\right).$$
		Then $I_{n+1}\subset I_n$, and for every $x\in I_n$,
		$$\lfloor xs_j\rfloor\le 1+\sum_{i<j}\lfloor xs_i\rfloor\quad(2\le j\le n),$$
		while $\lfloor xs_1\rfloor=1$.
	\end{lemma}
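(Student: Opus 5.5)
The plan is to construct the $\alpha_n$ by induction on $n$, and then read off the remaining assertions from \eqref{eq2}--\eqref{eq4} together with Lemma \ref{lem2}.

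\emph{Base case.} Put $\alpha_1=1/s_1$. Then $\Phi_1(\alpha_1)=2-1=1=s_1/s_1$, which gives \eqref{eq2} for $n=1$. Also $I_1=[1/s_1,2/s_1)$, so \eqref{eq4} holds for $n=1$.

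\emph{Inductive step.} Assume $\alpha_n$ satisfies \eqref{eq2}. I look for $\alpha_{n+1}=\alpha_n+t$ with
$$0\le t<\frac1{s_n}-\frac1{s_{n+1}}.$$
This range is nonempty because $s_{n+1}>s_n$, and any such $t$ gives \eqref{eq3} at once. Then \eqref{eq4} follows by chaining \eqref{eq3} back to $n=1$.

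For $j\le n$, the argument of Lemma \ref{lem2} (the floors $\lfloor xs_i\rfloor$ do not decrease as $x$ grows) gives
$$\Phi_j(\alpha_n+t)\ge \Phi_j(\alpha_n)-ts_j\ge s_j\Bigl(\frac1{s_n}-t\Bigr)\ge \frac{s_j}{s_{n+1}},$$
which is exactly \eqref{eq2} at level $n+1$ for these $j$. The remaining inequality is the new one:
$$\Phi_{n+1}(\alpha_{n+1})\ge 1.$$
This is where $s_{n+1}\le 2s_n$ must enter, and I expect it to be the main obstacle.

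\emph{The new inequality.} For $x\in I_n$, Lemma \ref{lem2} gives $\Phi_n(x)>0$, i.e. $H_n(x)\ge0$. Hence $\sum_{i<n}\lfloor xs_i\rfloor\ge\lfloor xs_n\rfloor-1$, and so
$$\Phi_{n+1}(x)\ge 1+2\lfloor xs_n\rfloor-xs_{n+1}\ge 1+2\bigl(\lfloor xs_n\rfloor-xs_n\bigr).$$
Thus it suffices to pick $x$ with $xs_n$ an integer, that is $x=k/s_n$. Then $xs_{n+1}\le 2xs_n=2k$ and $\Phi_{n+1}(x)\ge1$.

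The interval $I_n$ contains exactly one multiple $k/s_n$ of $1/s_n$. If $k/s_n<\alpha_n+1/s_n-1/s_{n+1}$, I take $\alpha_{n+1}=k/s_n$ and the step is done.

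Otherwise $\{\alpha_ns_n\}$ is small. In that case I would take $\alpha_{n+1}=\alpha_n$, which needs only $\Phi_{n+1}(\alpha_n)\ge1$. Here I would use the stronger information $\Phi_n(\alpha_n)\ge1$, i.e. $\sum_{i<n}\lfloor\alpha_ns_i\rfloor\ge\alpha_ns_n-1$, which gives
$$\Phi_{n+1}(\alpha_n)\ge 1+\lfloor\alpha_ns_n\rfloor-\alpha_n(s_{n+1}-s_n).$$
This must then be combined with the smallness of $\{\alpha_ns_n\}$ and with $s_{n+1}-s_n\le s_n$. If the crude bound falls short, for instance when $s_{n+1}=2s_n$, I would refine the choice: take $\alpha_{n+1}$ to be the least point of $[\alpha_n,\alpha_n+1/s_n-1/s_{n+1})$ at which some $\lfloor xs_i\rfloor$ with $i\le n$ jumps, so that the floor sum gains a unit while $xs_{n+1}$ grows by less than $1$. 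Closing this case cleanly is the crux of the whole lemma.

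\emph{Conclusion.} Given \eqref{eq2}--\eqref{eq4}, the inclusion $I_{n+1}\subset I_n$ is immediate from \eqref{eq3}. For $x\in I_n$ and $j\le n$, Lemma \ref{lem2} gives $\Phi_j(x)>0$, equivalently $H_j(x)\ge0$, which is the claimed inequality $\lfloor xs_j\rfloor\le1+\sum_{i<j}\lfloor xs_i\rfloor$. Finally, $\lfloor xs_1\rfloor=1$ because $I_n\subset I_1=[1/s_1,2/s_1)$.
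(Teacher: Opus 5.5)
Your Case A is fine. For the unique integer $k$ with $x=k/s_n\in I_n$, the bounds $H_n(x)\ge0$ and $xs_{n+1}\le 2k$ give $\Phi_{n+1}(x)\ge1$. This checks the same point $\beta$ the paper uses, and more neatly than the paper's computation of $\Phi_{n+1}(\beta)$.

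There is, however, a genuine gap. The complementary case, $0<r\le s_n/s_{n+1}$ with $r=\{\alpha_ns_n\}$, is left open, and neither of your suggestions closes it. Write $d=2s_n-s_{n+1}\ge0$. Your crude bound is then exactly $1+\alpha_nd-r$, which is below $1$ whenever $r>\alpha_nd$, for instance always when $d=0$. The proposed refinement does not work either. The window $[\alpha_n,\alpha_n+1/s_n-1/s_{n+1})$ is shorter than $1/s_n<1/s_i$, so it may contain no jump of any $\lfloor xs_i\rfloor$ with $i<n$. Even when a jump exists, you give no estimate showing $\Phi_{n+1}\ge1$ there.

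The missing idea is integrality, combined with the lower bound on $\alpha_n$. From $\Phi_n(\alpha_n)=H_n(\alpha_n)+1-r\ge1$ you get $H_n(\alpha_n)\ge r>0$. Since $H_n(\alpha_n)$ is an integer, in fact $H_n(\alpha_n)\ge1$, i.e.\ $\sum_{i<n}\lfloor\alpha_ns_i\rfloor\ge\lfloor\alpha_ns_n\rfloor$, not merely $\ge\alpha_ns_n-1$. This gives
\[
\Phi_{n+1}(\alpha_n)\ge 2+2\lfloor\alpha_ns_n\rfloor-\alpha_ns_{n+1}=2(1-r)+\alpha_nd .
\]
Now use $r\le s_n/s_{n+1}$ and $\alpha_n\ge 1/s_1\ge 1/s_{n+1}$ (from \eqref{eq4} at level $n$). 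These yield $2(1-r)+\alpha_nd\ge 2-2s_n/s_{n+1}+d/s_{n+1}=1$, so $\alpha_{n+1}=\alpha_n$ does work in this case.

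This is the content of the paper's argument, which runs in contrapositive form. There, $\Phi_{n+1}(\alpha_n)<1$ forces $H_n=e=1$, hence $2r-\alpha_nd\ge1$, hence $r>s_n/s_{n+1}$. It is also the point in the inductive step where the lower bound $\alpha_n\ge1/s_1$ is actually needed.
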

	
	\begin{proof} Let
		$$\alpha_1=\frac 1{s_1}.$$
		Then $\Phi_1(\alpha_1)=1$, and hence \eqref{eq2} holds for $n=1$.
		
		Suppose that $\alpha=\alpha_n$ has been chosen. Write
		$$p=s_n,\quad q=s_{n+1},\quad d=2p-q.$$
		Then $p<q\le 2p$ and $d\ge 0$. Let
		$$a_i=\lfloor\alpha s_i\rfloor,
		\quad H_j=1+\sum_{i<j}a_i-a_j,
		\quad r=\{\alpha p\},
		\quad e=a_{n+1}-2a_n.$$
		By $\alpha p=a_n+r$ and $q=2p-d$, we have
		$$\alpha q=2a_n+2r-\alpha d$$
		and
		\begin{equation}\label{eq5}
			e=\lfloor2r-\alpha d\rfloor\le 1.
		\end{equation}
		In view of \eqref{eq2}, we have
		$$H_n+1-r=\Phi_n(\alpha)\ge 1.$$
		Also,
		\begin{equation}\label{eq6}
			H_{n+1}=1+\sum_{i<n}a_i+a_n-a_{n+1}=H_n-e.
		\end{equation}
		If $r>0$, then $H_n\ge r>0$. Noting that $H_n$ is an integer, we have $H_n\ge 1$. By \eqref{eq5} and \eqref{eq6}, we have $H_{n+1}\ge 0$. If $r=0$, then $e=\lfloor-\alpha d\rfloor\le 0$, and again $H_{n+1}\ge 0$. It follows from \eqref{eq1} that
		$$\Phi_{n+1}(\alpha)>0.$$
		
		If $\Phi_{n+1}(\alpha)\ge 1$, let $\alpha_{n+1}=\alpha$. For $j\le n$, by \eqref{eq2} and $q>p$, we have
		$$\Phi_j(\alpha_{n+1})\ge\frac {s_j}{p}>\frac {s_j}{q}.$$
		Also,
		$$\Phi_{n+1}(\alpha_{n+1})\ge 1=\frac {s_{n+1}}q.$$
		Hence
		$$\alpha_{n+1}+\frac 1q<\alpha_n+\frac 1p.$$
		
		Now we assume that
		$$0<\Phi_{n+1}(\alpha)<1.$$
		If $H_{n+1}\ge 1$, then it follows from \eqref{eq1} that $\Phi_{n+1}(\alpha)>1$, a contradiction. Thus $H_{n+1}=0$. It follows from \eqref{eq1} and $\Phi_{n+1}(\alpha)<1$ that $\{\alpha q\}>0$. Hence
		\begin{equation}\label{eq7}
			H_{n+1}=0,
			\quad \{\alpha q\}>0.
		\end{equation}
		Suppose that $r=0$. Then $H_n\ge 0$ and $e\le 0$. By \eqref{eq6} and \eqref{eq7}, we have $H_n=e=0$. Hence
		$$0=e=\lfloor-\alpha d\rfloor.$$
		The last equality gives $-\alpha d\ge 0$. Since $\alpha>0$ and $d\ge 0$, it follows that $d=0$. Hence $q=2p$ and $\{\alpha q\}=\{2\alpha p\}=0$, contrary to \eqref{eq7}. Therefore $r>0$ and $H_n\ge 1$. By \eqref{eq6} and \eqref{eq7}, we have $H_n=e$. Hence
		$$H_n=e=1.$$
		Therefore
		\begin{equation}\label{eq8}
			2r-\alpha d\ge 1.
		\end{equation}
		Now we prove that
		\begin{equation}\label{eq9}
			r>\frac pq.
		\end{equation}
		Firstly, we assume that $d>0$. Then $q>s_1$ and
		$$\alpha=\alpha_n\ge\alpha_1=\frac 1{s_1}>\frac 1q.$$
		By \eqref{eq8},
		$$r\ge\frac {1+\alpha d}{2}>\frac {1+d/q}{2}=\frac pq.$$
		Now we assume that $d=0$. Then $q=2p$ and \eqref{eq8} gives $r\ge 1/2$. If $r=1/2$, then
		$$\{\alpha q\}=\{2\alpha p\}=0,$$
		contrary to \eqref{eq7}. Thus \eqref{eq9} holds.
		
		Let
		\begin{equation}\label{eq10}
			h=\frac {1-r}{p},
			\quad \beta=\alpha+h.
		\end{equation}
		By \eqref{eq9},
		\begin{equation}\label{eq11}
			0<h<\frac 1p-\frac 1q.
		\end{equation}
		For $i<j$, we have $\lfloor\beta s_i\rfloor\ge\lfloor\alpha s_i\rfloor$. Thus, for $j\le n$, by \eqref{eq2} and \eqref{eq11},
		\begin{align*}
			\Phi_j(\beta)&\ge\Phi_j(\alpha)-hs_j\\
			&\ge\frac {s_j}{p}-hs_j>\frac {s_j}{q}.
		\end{align*}
		Also, $\beta p=a_n+1$, and hence $\lfloor\beta p\rfloor=\lfloor\alpha p\rfloor+1$. For $i<n$, $\lfloor\beta s_i\rfloor\ge\lfloor\alpha s_i\rfloor$. Therefore
		\begin{equation}\label{eq12}
			\Phi_{n+1}(\beta)\ge\Phi_{n+1}(\alpha)+1-qh.
		\end{equation}
		It follows from $e=\lfloor2r-\alpha d\rfloor=1$ that
		$$\{\alpha q\}=2r-\alpha d-1.$$
		In view of \eqref{eq1} and $H_{n+1}=0$, we have
		$$\Phi_{n+1}(\alpha)=1-\{\alpha q\}=2(1-r)+\alpha d.$$
		By $q\le 2p$ and $d\ge 0$, we have
		$$qh=\frac qp(1-r)\le 2(1-r)\le\Phi_{n+1}(\alpha).$$
		Thus \eqref{eq12} gives
		$$\Phi_{n+1}(\beta)\ge 1.$$
		Let $\alpha_{n+1}=\beta$. Then \eqref{eq2} holds with $n$ replaced by $n+1$. By \eqref{eq10} and \eqref{eq11},
		$$\alpha_{n+1}>\alpha_n,\quad
		\alpha_{n+1}+\frac 1q<\alpha_n+\frac 1p.$$
		Hence \eqref{eq3} holds.
		
		In view of \eqref{eq3}, we have
		$$\alpha_n\ge\alpha_1=\frac 1{s_1},\quad
		\alpha_n+\frac 1{s_n}\le\alpha_1+\frac 1{s_1}=\frac 2{s_1}.$$
		Thus \eqref{eq4} holds. It follows from \eqref{eq3} that $I_{n+1}\subset I_n$. For any $x\in I_n$, write $x=\alpha_n+u$, where $0\le u<1/s_n$. By Lemma \ref{lem2} and \eqref{eq2}, we have $\Phi_j(x)>0$ for $1\le j\le n$. In view of \eqref{eq1},
		$$\lfloor xs_j\rfloor\le 1+\sum_{i<j}\lfloor xs_i\rfloor\quad(2\le j\le n).$$
		By \eqref{eq4}, we have
		$$1\le xs_1<2,$$
		and hence $\lfloor xs_1\rfloor=1$. This completes the proof of Lemma \ref{lem3}.
	\end{proof}
	
	\begin{proof}[Proof of Theorem \ref{thm1}]
		Firstly, we assume that $s_1\ge 2$. Let $\alpha_n$ be as in Lemma \ref{lem3}, and let
		$$b_n=\alpha_n+\frac 1{s_n}.$$
		By \eqref{eq3}, the sequence $\{\alpha_n\}$ is nondecreasing and the sequence $\{b_n\}$ is strictly decreasing. By \eqref{eq4}, both sequences are bounded. Hence both sequences converge. Since $s_1<s_2<\cdots$, it follows that $s_n\to\infty$. Also,
		$$b_n-\alpha_n=\frac 1{s_n}\longrightarrow 0.$$
		Hence $\{\alpha_n\}$ and $\{b_n\}$ have the same limit, say $\alpha$. For any positive integer $N$, we have
		$$\alpha\ge\alpha_N,
		\quad \alpha\le b_{N+1}<b_N.$$
		Thus $\alpha\in I_N$. Let $j\ge2$ and take $N\ge j$. By Lemma \ref{lem3}, we have
		$$\lfloor\alpha s_j\rfloor\le 1+\sum_{i<j}\lfloor\alpha s_i\rfloor.$$
		Also, $\alpha\in I_1$, and hence $\lfloor\alpha s_1\rfloor=1$. It is clear that the terms of $S_\alpha$ are nondecreasing positive integers. By Lemma \ref{lem1}, we have
		$$P(S_\alpha)=\mathbb{N}_0.$$
		Furthermore,
		$$\alpha\ge\alpha_1=\frac 1{s_1},
		\quad \alpha\le b_2<b_1=\frac 2{s_1}\le 1.$$
		Thus $0<\alpha<1$.
		
		Now we assume that $s_1=1$. Then $1=s_1<s_2\le2s_1=2$, and hence $s_2=2$. By the first case, there exists a real number $\alpha$ with $1/2\le\alpha<1$ such that
		$$P((\lfloor\alpha s_2\rfloor,\lfloor\alpha s_3\rfloor,\cdots))=\mathbb{N}_0.$$
		Since $\lfloor\alpha s_1\rfloor=0$, it follows that
		$$P(S_\alpha)=\mathbb{N}_0.$$
		This completes the proof of Theorem \ref{thm1}.
	\end{proof}
	
	\begin{proof}[Proof of Theorem \ref{thm2}]
		Let $m$ be a positive integer such that $s_m\ge 2$ and
		$$s_{n+1}\le 2s_n\quad(n\ge m).$$
		By Theorem \ref{thm1}, there exists $0<\alpha<1$ such that
		$$P((\lfloor\alpha s_m\rfloor,\lfloor\alpha s_{m+1}\rfloor,\cdots))=\mathbb{N}_0.$$
		Every term of the sequence on the left is a term of $S_\alpha$. All terms of $S_\alpha$ are nonnegative. Hence
		$$\mathbb{N}_0\subseteq P(S_\alpha)\subseteq\mathbb{N}_0.$$
		Thus $P(S_\alpha)=\mathbb{N}_0$. This completes the proof of Theorem \ref{thm2}.
	\end{proof}

\end{document}